\newtheorem{thm}{Theorem}[section]
\newtheorem{prop}[thm]{Proposition}
\newtheorem{cor}[thm]{Corollary}
\theoremstyle{definition}
\theoremstyle{remark}
\newtheorem{rem}[thm]{Remark}
\renewcommand{\d}{\partial}
\renewcommand{\bar}{\overline}
\DeclareMathOperator{\Spin}{{Spin}}
\numberwithin{equation}{section}
\def\dl {\bunderline{d}}
\def\Vu {\overline{V}}
\def\Vl {\sunderline{V}}
\newcommand{\bunderline}[1]{\underline{#1\mkern-2mu}\mkern2mu }
\newcommand{\sunderline}[1]{\underline{#1\mkern-3mu}\mkern3mu }
\def\du {\bar{d}}
\title{A note on PL-disks and rationally slice knots}
\author[K. Hendricks]{Kristen Hendricks}
\thanks{KH was partially supported by NSF grant DMS-2019396 and a Sloan Research Fellowship.}
\address{Department of Mathematics, Rutgers University, New Brunswick, NJ, USA}
\email{kristen.hendricks@rutgers.edu}
\author[J. Hom]{Jennifer Hom}
\thanks{JH was partially supported by NSF grants  DMS-1552285 and DMS-2104144.}
\address{School of Mathematics, Georgia Institute of Technology, Atlanta, GA, USA}
\email{hom@math.gatech.edu}
\author[M. Stoffregen]{Matthew Stoffregen}
\address{Department of Mathematics, Michigan State University, East Lansing, MI, USA}
\email{stoffre1@msu.edu}
\thanks{MS was partially supported by NSF grant DMS-1702532.}
\author[I. Zemke]{Ian Zemke}
\address{Department of Mathematics\\Princeton University\\  Princeton, NJ, USA}
\email{izemke@math.princeton.edu}
\thanks{IZ was partially supported by NSF grant DMS-1703685.}
\begin{document}

\begin{abstract} 
We give infinitely many examples of manifold-knot pairs $(Y, J)$ such that $Y$ bounds an integer homology ball, $J$ does not bound a non-locally-flat PL-disk in any integer homology ball, but $J$ does bound a smoothly embedded disk in a rational homology ball. The proof relies on formal properties of involutive Heegaard Floer homology.
\end{abstract}

\maketitle

\section{Introduction}

Every knot $K$ in $S^3$ bounds a non-locally-flat PL-embedded disk in $B^4$, obtained by taking the cone over $K$. (Throughout, we will not require PL-disks to be locally-flat.) The analogous statement does not hold for knots in more general manifolds. Adam Levine \cite[Theorem 1.2]{Levinenonsurj} found examples of manifold-knot pairs $(Y, J)$ such that $Y$ bounds a contractible 4-manifold and $J$ does not bound a PL-disk in any homology ball $X$ with $\d X = Y$; see also \cite{HLL}.  

The main result of this note concerns rationally slice knots in homology spheres bounding integer homology balls:
%The main result of this note gives examples of manifold-knot pairs $(Y, J)$ such that $Y$ bounds an integer homology 4-ball  and $J$ does not bound a PL-disk in any integer homology ball but $J$ does bound a smoothly embedded disk in a rational homology ball.

\begin{thm}\label{thm:main}
There exist infinitely many manifold-knot pairs $(Y, J)$ where $Y$ is an integer homology sphere and 
\begin{enumerate}
	\item $Y$ bounds an integer homology 4-ball,
	\item $J$ does not bound a PL-disk in any integer homology 4-ball,
	\item $J$ does bound a smoothly embedded disk in a rational homology $4$-ball.
\end{enumerate}
\end{thm}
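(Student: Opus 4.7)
The approach combines two ingredients: the existence of rationally slice knots in $S^3$ carrying nontrivial involutive Heegaard Floer concordance obstructions (for example, knots $K$ with $\underline{V}_0(K) > 0$, as established in recent work), and a transplantation that moves such a knot into an integer homology sphere bounding an integer homology ball while preserving the involutive obstruction.

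\emph{Construction and verification of (1) and (3).} Fix a rationally slice $K \subset S^3$ with nontrivial involutive concordance invariant. Build $Y$ as an integer homology sphere bounding an integer homology 4-ball $X_0$---for instance, via a suitable handle decomposition with vanishing Rokhlin invariant. Place $J$ in $Y$ as the image of $K$ under an integer homology concordance from $S^3$ to $Y$ running through $X_0$. Item (1) is built in. For (3), glue the rational homology 4-ball $V_K$ containing the rational slice disk for $K$ to $X_0$ along an equatorial $S^3 \subset X_0$; the union is a rational homology 4-ball bounded by $Y$ and contains a smoothly embedded disk with boundary $J$.

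\emph{PL-obstruction (2) via formal properties of involutive Floer theory.} Suppose $J$ bounded a PL-disk $\Delta$ in an integer homology 4-ball $X$ with $\partial X = Y$. The singularities of $\Delta$ are finitely many cone points on knots $K_1, \ldots, K_n \subset S^3$. Excising 4-ball neighborhoods of the cone points and replacing the local cones by product concordances presents $(Y, J)$ as integer-homology-concordant to $(S^3, K_1 \# \cdots \# K_n)$. Involutive knot Floer local equivalence is preserved by such concordances, so the $\iota_K$-local-equivalence class of $(Y, J)$ would coincide with that of some knot in $S^3$. I would show that the class of our $(Y, J)$ cannot be realized by any knot in $S^3$ by isolating a formal invariant of the $\iota_K$-complex---for instance, a relation coupling $\underline{V}_0(Y, J)$ and $\overline{V}_0(Y, J)$ with the involutive $d$-invariants $\underline{d}(Y), \overline{d}(Y)$---that is satisfied by $(Y, J)$ but is impossible for any $\iota_K$-complex arising from a knot in $S^3$.

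\emph{Infinitely many examples and the main obstacle.} Running the construction over an infinite family of rationally slice knots $K_n$ with involutive invariants taking infinitely many distinct values yields infinitely many distinct pairs $(Y_n, J_n)$. The principal obstacle is pinning down the right formal involutive constraint in the previous paragraph: the rationally slice input forces any such constraint to be invisible to rational homology concordance but visible to integer homology concordance, and it must combine with the involutive Heegaard Floer invariants of $Y$ in a way that no single knot in $S^3$ (even one obtained as an arbitrary connected sum) can satisfy.
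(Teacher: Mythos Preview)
Your construction is self-defeating. You propose to place $J$ in $Y$ as the image of $K$ under an integer homology concordance from $S^3$ to $Y$; but then $(Y,J)$ is, by construction, integrally homology concordant to $(S^3,K)$. As the paper notes just before Theorem~\ref{thm:core}, a knot $J\subset Y$ bounds a PL-disk in an integer homology ball if and only if $(Y,J)$ is integrally homology concordant to a knot in $S^3$. So your $J$ automatically bounds a PL-disk in an integer homology ball (cone off $K$ in $B^4$ and glue on the concordance), and property~(2) fails immediately. For the same reason, the $\iota_K$-local equivalence class of your $(Y,J)$ \emph{equals} that of $(S^3,K)$, so no formal invariant of that class can separate them; the ``principal obstacle'' you flag is in fact an impossibility for this construction.

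The paper's construction avoids this by taking $J$ to be (essentially) the \emph{core of surgery} $\mu$ in $M=S^3_{-1/\ell}(K)$, sitting inside $Y=M\#-M$. Rational sliceness of $K$ makes $\mu$ rationally concordant to the unknot, but $\mu$ is not a priori integrally concordant to anything in $S^3$. The obstruction is not extracted from the $\iota_K$-complex of $J$ directly; rather, one performs $1/n$-surgery on $J$ (with $n$ even) and compares the involutive and ordinary correction terms of the resulting $3$-manifold. Proposition~\ref{prop:HHSZ} says these agree for even-denominator surgery on any knot in $S^3$, hence (via Corollary~\ref{cor:HHSZ}) for any $(Y,J)$ integrally homology concordant to such a knot; the explicit computation shows they differ for the pairs at hand. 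Any repair of your outline would need both a genuinely new placement of $J$ (not an integral concordance transplant of $K$) and a concrete obstruction of this surgery-theoretic type rather than an unspecified relation among knot invariants.
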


Throughout, let $Y$ be an integer homology sphere. Recall that a knot $J \subset Y$ is \emph{rationally slice} if $J$ bounds a smoothly embedded disk in a rational homology $4$-ball $W$ with $\d W = Y$. Two manifold-knot pairs $(Y_0, J_0)$ and $(Y_1, J_1)$ are \emph{integrally} (respectively \emph{rationally}) \emph{homology concordant} if $J_0$ and $J_1$ are concordant in an integral (respectively rational) homology cobordism between $Y_0$ and $Y_1$. A knot $J \subset Y$ is integrally (respectively rationally) homology concordant to a knot $K$ in $S^3$ if and only if $J \subset Y$ bounds a PL-disk in an integer (respectively rational) homology ball.

Theorem \ref{thm:main} is an immediate consequence of the following theorem, where $\Vl_0$ and $\Vu_0$ are the involutive knot Floer homology invariants of \cite{HMInvolutive} and $V_0$ the knot Floer homology invariant defined in \cite[Section 2.2]{NiWu} (see also \cite{RasmussenKnots}, \cite{OSKnots}): 

\begin{thm}\label{thm:core}
Let $K$ be a negative amphichiral rationally slice knot in $S^3$ with $\Vl_0 \geq 1$ and $V_0 = \Vu_0 = 0$ and let $\mu$ be the core of surgery in $M=S^3_{-1/\ell}(K)$, where $\ell$ is an odd positive integer. Consider $J = \mu \# U \subset M \# -M$, where $U$ denotes the unknot in $-M$. Then $(M \# -M, J)$ is rationally slice, hence rationally homology concordant to a knot in $S^3$, but $(M \# -M, J)$ is not integrally homology concordant to any knot in $S^3$.
\end{thm}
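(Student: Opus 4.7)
The theorem has two parts, and I would treat them separately. For the rational sliceness, since $K$ is rationally slice, it bounds a smoothly embedded disk $D$ in a rational homology $4$-ball $V$ with $\partial V = S^3$. Setting $E = V \setminus \nu(D)$, a Mayer--Vietoris calculation gives $H_*(E;\Q) \cong H_*(S^1;\Q)$, generated by the meridian $\mu_K$ of $D$. Attaching an $\ell$-framed $2$-handle along $\mu_K \subset \partial E = S^3_0(K)$ produces a $4$-manifold $W$ with $\partial W = S^3_{-1/\ell}(K) = M$ (by a slam-dunk), and the cocore of the $2$-handle is a smooth disk in $W$ bounded by the core $\mu$ of the surgery. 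A second Mayer--Vietoris check confirms $W$ is a rational homology ball: the attaching circle $\mu_K$ generates $H_1(E;\Q)$, so the $2$-handle kills $H_1$ exactly and creates no $H_2$. Boundary-connect-summing $W \natural (-W)$ then produces a rational homology ball bounded by $M \# -M$, inside which the boundary-connect-sum of the disk for $\mu \subset W$ with a trivial disk for $U \subset -W$ is a smooth disk bounded by $J = \mu \# U$.

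For the non-concordance, I would argue by contradiction using the involutive invariants $\Vl_0, \Vu_0$, which are invariants of integer homology concordance. For any knot $K' \subset S^3$ one has the standard bounds $\Vl_0(K') \leq V_0(K') + 1$ and $\Vu_0(K') \geq V_0(K') - 1$, hence $\Vl_0(K') - \Vu_0(K') \leq 2$; it therefore suffices to show $\Vl_0(M \# -M, J) - \Vu_0(M \# -M, J) > 2$. The plan is a three-step computation. First, I would compute $\Vl_0(M, \mu)$ and $\Vu_0(M, \mu)$ using the involutive surgery formula for the dual knot: the complement of $\mu \subset M$ agrees with that of $K \subset S^3$, so $\CFK^\infty(M, \mu)$ is a grading-shifted copy of $\CFK^\infty(S^3, K)$ with a controlled $\iota_K$-action; together with the hypotheses $\Vl_0(K) \geq 1$, $V_0(K) = \Vu_0(K) = 0$ and the negative amphichiral symmetry, this should produce a gap $\Vl_0(M, \mu) - \Vu_0(M, \mu)$ that grows with $\ell$. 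Second, I would reduce $\Vl_0(-M, U)$ and $\Vu_0(-M, U)$ to the involutive correction terms $\underline{d}(-M), \bar{d}(-M)$ of $-M$. Third, I would apply the connected-sum formula for involutive knot Floer invariants to $(M \# -M, J)$; negative amphichirality of $K$ makes the $\pm M$ contributions balance, leaving a gap $\Vl_0 - \Vu_0$ that still grows with $\ell$. For all sufficiently large odd $\ell$, this gap exceeds $2$, giving the desired contradiction, and varying $\ell$ produces infinitely many distinct examples.

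The main obstacle I expect is the combined Floer-theoretic computation: precisely tracking the $\iota_K$-action under the dual-knot identification for $(-1/\ell)$-surgery, and then following it through the connected sum with $(-M, U)$ via the connected-sum formula. The involutive invariants are not additive under connected sum in the naive way, so extracting the correct values of $\Vl_0$ and $\Vu_0$ on $(M \# -M, J)$ requires careful use of the surgery and connected-sum formulas for involutive knot Floer homology. Negative amphichirality of $K$ is what provides the symmetry between $\Vl_0$ and $\Vu_0$ on $K$, and it is this symmetry that, after passing through the surgery and the connected sum, yields a gap exceeding the bound $2$ that governs knots in $S^3$.
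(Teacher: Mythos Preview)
Your argument for rational sliceness is correct, if more explicit than necessary; the paper simply observes that a rational concordance from $K$ to the unknot yields, after surgering along the annulus, a rational concordance from $(M,\mu)$ to the unknot in $S^3$.

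The obstruction argument, however, has genuine gaps. First, the ``standard bounds'' $\Vl_0(K') \le V_0(K')+1$ and $\Vu_0(K') \ge V_0(K')-1$ for knots $K'\subset S^3$ are not standard; you would need to establish them, and I do not believe they hold in general. Second, the core computational claim---that $\Vl_0(M,\mu)-\Vu_0(M,\mu)$ grows with $\ell$---is asserted without justification. The complement of $\mu$ in $M$ is independent of $\ell$, and while the peripheral framing (hence the Alexander grading and the precise $\iota_K$-complex) does change, you have given no mechanism for unbounded growth. Third, even granting everything, your argument only covers \emph{sufficiently large} odd $\ell$, whereas the theorem is stated for \emph{every} odd positive $\ell$. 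Finally, much of what you wrote is a plan (``this should produce'', ``I would apply'') rather than a proof; the involutive connected-sum and dual-knot computations you allude to are substantial and you have not carried them out.

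The paper's approach is entirely different and avoids all of this. Rather than computing involutive \emph{knot} invariants of $J$, it performs $1/n$-surgery on $J$ for some even $n>\ell$ and compares the three-manifold invariants $\dl$ and $d$ of the result. The key observation is that $M_{1/n}(\mu)=S^3_{1/(n-\ell)}(K)$, so $(M\#-M)_{1/n}(J)=S^3_{1/(n-\ell)}(K)\#(-M)$, and by negative amphichirality $-M=S^3_{1/\ell}(K)$. Both summands are odd-denominator surgeries on $K\subset S^3$, so their $d$, $\dl$, $\du$ are read off directly from $V_0(K)$, $\Vl_0(K)$, $\Vu_0(K)$ via \cite[Proposition~1.7]{HHSZ}; the hypotheses $\Vl_0(K)\ge 1$ and $V_0(K)=\Vu_0(K)=0$ give $d=0$ and, using the subadditivity $\dl(Y_1\#Y_2)\le \dl(Y_1)+\du(Y_2)$, give $\dl\le -2$. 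But the same \cite[Proposition~1.7]{HHSZ} says that for even-denominator surgery on any knot in $S^3$ one has $\dl=d$; since these are integer homology cobordism invariants, $(M\#-M,J)$ cannot be integrally homology concordant to a knot in $S^3$. This works for every odd $\ell\ge 1$, requires no dual-knot or connected-sum computation at the level of $\iota_K$-complexes, and uses only the given numerical hypotheses on $K$.
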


\begin{rem}
The figure-eight satisfies the hypotheses of Theorem \ref{thm:core} by \cite{Fintushel-Stern:1984-1} (see also \cite[Section 3]{AkbulutLarson} and \cite[Theorem 1.7]{HMInvolutive}). More generally, the genus one knots $K_n$ with $n$ positive full twists in one band and $n$ negative full twists in the other band, $n$ odd, also satisfy the hypotheses of Theorem \ref{thm:core}; see Figure \ref{fig:genusone}. Indeed, $K_n$ is strongly negative amphichiral, hence rationally slice \cite[Section 2]{kawauchi}. Furthermore, $\sigma(K_n)=0$ since $K_n$ is amphichiral. The knot $K_n$ has Seifert form 
\[ \begin{pmatrix}
n & 1 \\
0 & -n
\end{pmatrix} \]
which implies that  $\operatorname{Arf}(K_n) =1$ if and only if $n$ is odd. Since $K_n$ is alternating, it now follows from \cite[Theorem 1.7]{HMInvolutive} that for $n$ odd,  $\Vl_0 =1$ and $V_0 = \Vu_0 = 0$. %∆K(t)=n^2 t^2 −(2n^2 +1)t+n^2
\end{rem}

\begin{figure}[htb!]
\includegraphics{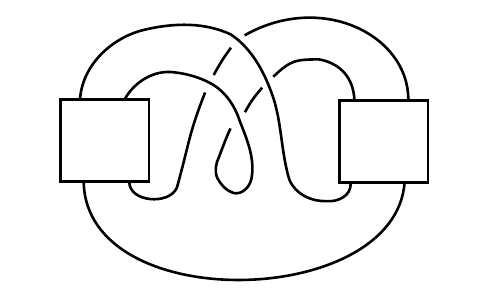}
\put(-115,42){$n$}
\put(-38,42){$-n$}
\caption{\label{fig:genusone} The knot $K_n$, where $n$ and $-n$ denote the number of positive full twists.}
\end{figure}

\begin{rem}
Note that $M$ does not bound an integer homology ball (since, for instance, $\dl(M) =2\Vl_0\neq 0$), but $M \# -M$ does.
\end{rem}

The proof of Theorem \ref{thm:core} is inspired by the proof of \cite[Theorem 1.1(1)]{HLL}. Our proof relies on the following result from \cite{HHSZ} relating the involutive correction term $\dl$ \cite[Section 5]{HMInvolutive} with the ordinary Heegaard Floer correction term $d$ \cite[Section 4]{OSIntersectionForms}, for even denominator surgery on knots in $S^3$:
\begin{prop}[{\cite[Proposition 1.7]{HHSZ}}]\label{prop:HHSZ}
Let $K$ be a knot in $S^3$ and let $p,q>0$ be relatively prime integers, with $p$ odd and $q$ even. Then
\[ \dl(S^3_{p/q}(K), [p/2q]) = d(S^3_{p/q}(K), [p/2q]) \]
where $[p/2q]$ denotes the unique self-conjugate $\Spin^c$ structure on $S^3_{p/q}(K)$.
\end{prop}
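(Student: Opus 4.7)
The plan is to apply the involutive mapping cone formula for rational surgeries developed in \cite{HHSZ} together with a parity argument that exploits the hypothesis $p$ odd and $q$ even. Since $\dl(Y,\mathfrak{s}) \leq d(Y,\mathfrak{s})$ in general \cite{HMInvolutive}, it is enough to produce a $U$-nontorsion homogeneous cycle of grading $d$ in $\bCFI^-(S^3_{p/q}(K),[p/2q])$ that is $\iota$-invariant modulo the image of $1+\iota$.

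First, I would decompose the involutive mapping cone along Spin$^c$ structures and isolate the summand corresponding to $[p/2q]$. Because $p$ is odd, this self-conjugate Spin$^c$ class is unique; under the standard identification it corresponds, after translating the origin, to the subcomplex of the Ozsv\'ath--Szab\'o mapping cone $\bX_{p/q}(K)$ indexed by those $s \in \mathbb{Z}$ with $s \equiv 0 \pmod{p}$.

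Next, I would analyze the action of the involution on this subcomplex. The involution combines $\iota_K$ on each $A$- and $B$-summand with a reflection $s \mapsto -s$ on the Spin$^c$ index. With $q$ even, this reflection preserves the $[p/2q]$-subcomplex and has $s=0$ as its unique fixed index; the remaining indices $\pm kp$ ($k>0$) are swapped in disjoint pairs. On the $s=0$ summand the involution restricts to $\iota_K$ acting on $A_0(K)$, whose $U$-tower generator at grading $d$ is $\iota_K$-invariant modulo the image of $U$ and $1+\iota_K$. The paired off-diagonal summands contribute acyclically to the cone of $1+\iota$, so only the $s=0$ contribution survives and provides an $\iota$-invariant representative at grading $d$, yielding $\dl \geq d$.

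The principal technical obstacle is the chain-level bookkeeping: verifying that the reflection $s \mapsto -s$ genuinely preserves the $[p/2q]$-subcomplex (this is where $q$ even becomes essential, since for $q$ odd the corresponding Spin$^c$ symmetry is translated by a half-integer shift and the self-conjugate class is not represented by $s=0$), matching grading conventions across the $A_s(K)$ and $B_s(K)$ summands, and controlling the lower-order off-diagonal pieces of $\iota_K$ on $A_0(K)$ to ensure they do not push the chosen tower representative below grading $d$. A secondary issue is confirming that the paired indices $\{s,-s\}$ really cancel in $HFI^-$ and do not produce spurious $\iota$-invariant classes at lower gradings that could obstruct the lower bound $\dl \geq d$.
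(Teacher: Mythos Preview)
This proposition is not proved in the present paper; it is quoted from \cite{HHSZ}, so there is no in-paper argument to compare against.  Your overall plan---compute via the involutive rational-surgery mapping cone of \cite{HHSZ} and analyze the reflection symmetry on the index set---is indeed the method used there.

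The substantive problem is that your parity analysis is inverted, and the inversion is fatal rather than cosmetic.  In the rational mapping cone for the self-conjugate $\Spin^c$ structure, the reflection (which intertwines the $v$- and $h$-maps and carries $A_s$-summands to $A_{-s}$-summands) has a fixed $A$-summand---namely $A_0$, with $\iota_K$ acting on it---precisely when $q$ is \emph{odd}.  When $q$ is \emph{even} there is no fixed $A$-summand at all; the unique fixed summand is a $B$-complex, and on $B\simeq \CF^-(S^3)$ the involution is homotopic to the identity.  It is the triviality of $\iota$ on this fixed $B$ that forces $\dl=d$ in the even-$q$ case.  The mechanism you describe---$\iota_K$ acting on a fixed $A_0$---is exactly what occurs for $q$ odd, and there it produces $\dl$ in terms of $\Vl_0(K)$, which is in general \emph{strictly smaller} than $d=-2V_0(K)$.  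Indeed, the proof of Theorem~\ref{thm:core} in this very paper uses that $\dl(S^3_{1/q}(K))=-2\Vl_0(K)<0=d$ for the odd denominators $q=n-\ell$ and $q=\ell$.  So if your argument were valid it would apply verbatim to odd $q$ and contradict those computations.

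A secondary gap: the reduction ``produce a $U$-nontorsion $\iota$-invariant cycle at grading $d$'' does not by itself give $\dl\ge d$; one must check that the resulting class in $\HFI^-$ sits in the correct tower relative to the $Q$-action.  This is automatic once the fixed summand is a $B$-complex with trivial involution, but it is exactly what fails when the fixed summand is $A_0$ carrying a nontrivial $\iota_K$.
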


\noindent The key feature from the above proposition is that for even denominator surgery on a knot in $S^3$, we have that $\dl$ is equal to $d$ for the unique self-conjugate $\Spin^c$ structure on the surgery. More generally, we have the following corollary of Proposition \ref{prop:HHSZ}:

\begin{cor}\label{cor:HHSZ}
Let $J$ be a knot in an integer homology sphere $Y$ and let $p,q>0$ be relatively prime integers, with $p$ odd and $q$ even.  If $(Y, J)$ is integrally homology concordant to a knot in $S^3$, then 
\[ \dl(Y_{p/q}(J), [p/2q]) = d(Y_{p/q}(J), [p/2q]) \]
where $[p/2q]$ denotes the unique self-conjugate $\Spin^c$ structure on $Y^3_{p/q}(J)$.
\end{cor}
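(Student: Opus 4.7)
The plan is to transfer the equality of Proposition \ref{prop:HHSZ} across a rational homology cobordism obtained by surgering the ambient concordance.

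By hypothesis, there exists an integer homology cobordism $W \co Y \to S^3$ containing a smoothly embedded annulus $A$ from $J$ to some knot $K \subset S^3$. First I would perform $p/q$ Dehn surgery along $A$ --- i.e.\ remove $\nu(A) \cong A \times D^2$ and reglue with $p/q$-slope with respect to the Seifert framing of $A$ --- to produce a compact 4-manifold $W'$ with $\d W' = -Y_{p/q}(J) \sqcup S^3_{p/q}(K)$. A direct Mayer--Vietoris computation, using that $H_*(W \setminus \nu(A); \Z) \cong H_*(S^1; \Z)$ with $H_1$ generated by the meridian of $A$ (since $W$ is an integer homology cobordism), yields $H_2(W'; \Z) = 0$ and $H_1(W'; \Z) \cong \Z/p$. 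In particular $W'$ is a rational homology cobordism.

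Since $p$ is odd, $H^1(W'; \Z/2) = 0$ and $H^2(W'; \Z/2) = 0$, so $W'$ is spin with a unique spin structure, hence a unique self-conjugate $\Spin^c$ structure $\mathfrak{s}$ (necessarily with $c_1(\mathfrak{s}) = 0$). Its restrictions to the two boundary components are therefore forced to be the unique self-conjugate $\Spin^c$ structures $[p/2q]$. The rational homology cobordism invariance of the correction term $d$ (Ozsv\'ath--Szab\'o) and of the involutive correction term $\dl$ (Hendricks--Manolescu), applied with $b_2(W') = 0$ and $c_1^2(\mathfrak{s}) = 0$ so that all grading shifts vanish, then yields
\[
d(Y_{p/q}(J),[p/2q]) = d(S^3_{p/q}(K),[p/2q]), \qquad \dl(Y_{p/q}(J),[p/2q]) = \dl(S^3_{p/q}(K),[p/2q]).
\]
Proposition \ref{prop:HHSZ} applied to $K \subset S^3$ makes the right-hand sides equal, so the left-hand sides are equal as well, which is the desired identity.

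The hard part is the $\Spin^c$ bookkeeping across the surgered cobordism --- computing $H^*(W'; \Z)$ and verifying that the unique self-conjugate $\Spin^c$ structure on $W'$ really does restrict to $[p/2q]$ on both boundary components. This is where both parity hypotheses pull their weight: $p$ odd kills the mod-$2$ cohomology of $W'$ and forces uniqueness of the self-conjugate structures on the two boundaries, while $q$ even is exactly the hypothesis needed to invoke Proposition \ref{prop:HHSZ}.
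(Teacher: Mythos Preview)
Your proposal is correct and follows essentially the same route as the paper: surger along the concordance annulus to get a homology cobordism between $Y_{p/q}(J)$ and $S^3_{p/q}(K)$, then transport Proposition~\ref{prop:HHSZ} across using the cobordism invariance of $d$ and $\dl$. The paper's proof is terser---it simply asserts that the surgered cobordism is an \emph{integer} homology cobordism (which follows from excision: $H_*(W',Y_{p/q}(J)) \cong H_*(W\setminus\nu(A),Y\setminus\nu(J)) \cong H_*(W,Y)=0$) and invokes invariance of $d$ and $\dl$ without further $\Spin^c$ discussion---whereas you work out the homology of $W'$ directly, claim only a rational homology cobordism, and do the $\Spin^c$ matching by hand. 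Your extra care is not wasted, but it is also not the ``hard part'': once $p$ is odd, the self-conjugate $\Spin^c$ structure is unique on each piece and restriction commutes with conjugation, so the bookkeeping is automatic.
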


\begin{proof}
If $(Y, J)$ is integrally homology concordant to a knot $(S^3, K)$, then $Y^3_{p/q}(J)$ and $S^3_{p/q}(K)$ are integrally homology cobordant; the homology cobordism is given by surgering along the concordance annulus from $(Y, J)$ to $(S^3, K)$. Since $\dl$ and $d$ are invariants of integer homology cobordism, the result follows from Proposition \ref{prop:HHSZ}.
\end{proof}

The proof of Theorem \ref{thm:core} relies on finding manifold-knot pairs $(Y, J)$ where $\dl$ and $d$ of even denominator surgery along $J$ differ; the result then follows from Corollary \ref{cor:HHSZ}.

\section*{Acknowledgements}
We thank JungHwan Park for helpful conversations and Chuck Livingston for thoughtful comments on an earlier draft.

\section{Proof of Theorem \ref{thm:core}}

\begin{proof}[Proof of Theorem \ref{thm:core}]
We first show that  $(M \# -M, J)$ is rationally slice. Since $K$ is rationally slice, the core of surgery in $M=S^3_{-1/\ell}(K)$ is rationally homology concordant to the core of surgery in $S^3_{-1/\ell}(U)$, which is the unknot in $S^3$; that is, $(M, \mu)$ is rationally slice. Hence $(M \# -M, J)$ is also rationally slice.

We now show that $(M \# -M, J)$ is not integrally homology concordant to any knot in $S^3$. Since $\mu$ is the core of surgery in $S^3_{-1/\ell}(K)$, we have that
\[ M_{1/n}(\mu) = S^3_{1/(n-\ell)}(K). \]
Choose an even positive integer $n$ such that $n >\ell$. Since $\ell$ is odd, $n$ is even, and $n-\ell > 0$, by \cite[Proposition 1.7]{HHSZ} we have that 
\[  \dl(M_{1/n}(\mu) ) =  \dl(S^3_{1/(n-\ell)}(K)) = -2\Vl_0(K) \quad \textup{ and } \quad d(M_{1/n}(\mu)) = d(S^3_{1/(n-\ell)}(K))=-2V_0(K) = 0. \] 
Since $J = \mu \# U \subset M \# -M$, we have that
\[ (M \# -M)_{1/n}(J) = M _{1/n}(\mu) \# -M. \]
Note that $-M = S^3_{1/\ell}(-K)=S^3_{1/\ell}(K)$, where the last equality follows from the fact that $K$ is negative amphichiral. Since $\ell > 0$, \cite[Proposition 1.7]{HHSZ} implies that 
\[ \dl(-M) = -2\Vl_0(K) \quad \textup{ and } \quad d(-M) = \du(-M) = 0.\]
Recall that \cite[Proposition 1.3]{HMZConnectedSum} states that if $Y_1$ and $Y_2$ are integer homology spheres, then
\[ \dl(Y_1 \# Y_2) \leq \dl(Y_1) + \du(Y_2). \]
Hence $\dl(M_{1/n}(\mu) \# -M) \leq -2\Vl_0(K)$. Since $d$ is additive under connected sum, we have that $d(M_{1/n}(\mu) \# -M) = 0$.

We have shown that
\[ \dl((M \# -M)_{1/2}(J)) \leq -2\Vl_0(K) \quad \textup{ and } \quad d((M \# -M)_{1/2}(J)) = 0. \]
Recall that $\Vl_0(K) \geq 1$. Now by Corollary \ref{cor:HHSZ}, it follows that $(M \# -M, J)$ is not integrally homology concordant to any knot in $S^3$.
\end{proof}

\bibliographystyle{custom}
\def\MR#1{}
\bibliography{biblio}

\end{document}